\newtheorem{theorem}{Theorem}[section]
\newtheorem{corollary}[theorem]{Corollary}
\theoremstyle{definition}
\theoremstyle{remark}
\numberwithin{equation}{section}
\newcommand{\p}{\partial}
\newcommand{\n}{\nabla}
\newcommand{\z}{\bar{z}}
\newcommand{\der}[1]{\frac{\partial}{\partial #1}}
\newcommand{\fder}[2]{\frac{\partial #1}{\partial #2}}
\begin{document}
\date{}
\title[Uniqueness theorems for free boundary minimal disks in space forms]
{Uniqueness theorems for free boundary minimal disks in space forms}
\author{Ailana Fraser}
\address{Department of Mathematics \\
                 University of British Columbia \\
                 Vancouver, BC V6T 1Z2}
\email{afraser@math.ubc.ca}
\author{Richard Schoen}
\address{Department of Mathematics \\
                 University of California \\
                 Irvine, CA 92617}
\email{rschoen@math.uci.edu}
\thanks{2010 {\em Mathematics Subject Classification.} 53A10. \\
A. Fraser was partially supported by  the 
Natural Sciences and Engineering Research Council of Canada and R. Schoen
was partially supported by NSF grant DMS-1404966.}

\begin{abstract} 
We show that a minimal disk satisfying the free boundary condition in a constant curvature 
ball of any dimension is totally geodesic. We weaken the condition to parallel mean
curvature vector in which case we show that the disk lies in a three dimensional 
constant curvature submanifold and is totally umbilic. These results extend to
higher dimensions earlier three dimensional work of J. C. C. Nitsche and R. Souam.

\end{abstract}

\maketitle

\section{Introduction}
In this short note we consider the free boundary minimal disks
in a ball in Euclidean space or a space $N^n$ of constant curvature. These are proper
branched minimal immersions of a disk into the ball that meet the boundary
orthogonally (the conormal vector of the disk is normal to the boundary of the
ball). Such surfaces have been extensively studied and they arise as extremals
of the area functional for relative cycles in the ball. They also arise as extremals
of a certain eigenvalue problem \cite{FS1} and \cite{FS2}. We show here that
any such free boundary minimal disk is a totally geodesic disk passing through the 
center of the ball. 

We extend the result to free boundary disks with parallel mean curvature in a ball. 
If the mean curvature is not zero, we show that such disks are contained in a totally
geodesic three dimensional submanifold (an affine subspace in the flat case)
and that the disk is totally umbilic.

Both of these results are known in case $n=3$. For the euclidean case, both the minimal
and constant mean curvature results were obtained by J. C. C. Nitsche \cite{Ni}. In the case of space
forms, the results were obtained for $n=3$ by R. Souam \cite{S}, and were generalized
to constant contact angle boundary conditions by A. Ros and R. Souam \cite{RS}. 

The method of proof for the results is complex analytic as in the case $n=3$. The
general idea goes back to H. Hopf and involves the construction of holomorphic
differentials which then vanish under suitable conditions on the surface and suitable
boundary conditions. In the $n=3$ case, the Hopf differential is a holomorphic quadratic
differential and the free boundary condition implies that it is real along the boundary
in the sense that it has real values when applied to the unit tangent vector. Geometrically,
this follows from the condition that the boundary be a principal curve. In higher
dimensions the corresponding differential has values in the normal bundle, but there
is an associated complex valued quartic differential gotten by squaring the Hopf differential
using the inner product. Our result is obtained by showing vanishing of this quartic
differential and discussing the consequences.

There is an analog between minimal submanifolds of spheres and free boundary
minimal submanifolds of the ball. Under this analog the Nitsche theorem is analogous
to a result of Almgren \cite{A1} which says that a minimal $\mathbb S^2$ in $\mathbb S^3$ is
totally geodesic. It is well known (see Calabi \cite{C}) that there are many minimal immersions of $\mathbb S^2$
in $\mathbb S^n$ for $n\geq 4$ which are not totally geodesic. From the point of view
of this analogy our results are surprising. To make a more definite connection 
between the two problems, we point out that our results imply that any minimal $\mathbb S^2$
in $\mathbb S^n$ which is invariant under reflection through a totally geodesic 
$\mathbb S^{n-1}$ must be totally geodesic.

\section{Uniqueness results}

\begin{theorem}
Let $u: D \rightarrow B^n$ be a proper branched minimal immersion, such that $u(D)$ meets $\p B$ orthogonally. Then $u(D)$ is an equatorial plane disk.
\end{theorem}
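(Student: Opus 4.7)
\emph{Plan.} Fix a conformal coordinate $z$ on $D$ with $\partial D=\{|z|=1\}$. In flat ambient space, conformality plus minimality gives $\Delta u=0$, so $u_{zz}$ is holomorphic in $z$. Extending the Euclidean inner product $\mathbb{C}$-bilinearly, set $\phi(z):=\langle u_{zz},u_{zz}\rangle$. Conformality $\langle u_z,u_z\rangle\equiv 0$ implies $\langle u_{zz},u_z\rangle\equiv 0$, so $\phi$ equals the squared complex length of the normal part of $u_{zz}$ --- the ``square'' of the normal-bundle-valued Hopf differential mentioned in the introduction. It is holomorphic on $D$.

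The next step shows that $z^4\phi$ is real on $|z|=1$. Passing to polar coordinates and using $\Delta u=0$ to eliminate $u_{rr}$ gives
\[
u_{zz}|_{r=1}=\tfrac{\bar z^{2}}{2}(V+iW),\qquad V=u_{rr},\ W=u_\theta-u_{r\theta},
\]
so $z^4\phi=\tfrac14(|V|^2-|W|^2+2i\langle V,W\rangle)$ on $\partial D$. The free boundary condition $u_r=\mu u$ with $|u|=1$ yields $|u_\theta|=\mu$, $u\perp u_\theta$, $u_{r\theta}=\mu'u+\mu u_\theta$, $\langle u_{\theta\theta},u\rangle=-\mu^2$, and $\langle u_{\theta\theta},u_\theta\rangle=\mu\mu'$; substituting these into $\langle V,W\rangle$ produces exact cancellation. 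Then $G(z):=z^4\phi(z)$ is holomorphic on $D$, real on $\partial D$, and vanishes to order at least $4$ at $0$; Schwarz reflection extends it to an entire function vanishing at infinity, and Liouville forces $G\equiv 0$, so $\phi\equiv 0$ on $D$.

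To conclude that $u(D)$ is an equatorial disk, I revisit $\phi|_{\partial D}=0$, which now also requires $|V|^2=|W|^2$. Decomposing $u_{\theta\theta}=-\mu^2u+(\mu'/\mu)u_\theta+N$ with $N\perp\operatorname{span}(u,u_\theta)$ and expanding, the identity reduces to $|N|^2=0$. Thus $u_{\theta\theta}\in\operatorname{span}(u,u_\theta)$ along $\partial D$, which is exactly the condition that the moving $2$-plane $\operatorname{span}(u(\theta),u_\theta(\theta))$ be independent of $\theta$. Hence the boundary curve lies in a fixed $2$-plane $\Pi$ through the origin. For any $v\in\Pi^\perp$ the component $\langle u,v\rangle$ is harmonic on $D$ with zero boundary values and so vanishes identically, giving $u(D)\subset\Pi$; properness and the free boundary condition then force $u(D)=B\cap\Pi$, the desired equatorial disk.

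\emph{Main obstacle.} The crux is the boundary calculation: showing that the imaginary part of $z^4\phi|_{\partial D}$ vanishes (so that reflection applies), and that once $\phi\equiv 0$ the ensuing vanishing of the real part on the boundary has the precise geometric content $u_{\theta\theta}\in\operatorname{span}(u,u_\theta)$, which forces the boundary curve to be planar. In codimension at least two, $\phi\equiv 0$ is strictly weaker than vanishing of the Hopf differential, so this geometric interpretation of the boundary identity is what drives the higher-dimensional result.
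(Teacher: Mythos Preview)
Your proof is correct and follows the same overall strategy as the paper: form the holomorphic quartic $z^{4}\langle u_{zz},u_{zz}\rangle$, show it is real on $\partial D$ from the free boundary condition, conclude it vanishes since it is zero at the origin, and then read off the geometry of the boundary curve. The execution differs in two places worth noting. First, the paper projects $u_{zz}$ to the normal bundle before any boundary computation; since $u_{r\theta}=(\lambda_{\theta}/\lambda)\,u_{r}+\lambda\,u_{\theta}$ is manifestly tangent to $\Sigma$, one has $u_{r\theta}^{\perp}=0$ at once, and reality of $z^{4}(u_{zz}^{\perp})^{2}$ on $\partial D$ follows from the polar expression $u_{zz}^{\perp}=\tfrac{1}{4}e^{-2i\theta}\bigl(u_{rr}^{\perp}-r^{-2}u_{\theta\theta}^{\perp}-2ir^{-1}u_{r\theta}^{\perp}\bigr)$ with no further algebra, whereas your unprojected computation of $\langle V,W\rangle=0$ reaches the same conclusion by a longer (but equally valid) route. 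Second, from the vanishing on $\partial D$ the paper obtains $u_{rr}^{\perp}=u_{\theta\theta}^{\perp}=0$ (using minimality), i.e.\ the second fundamental form of $\Sigma$ vanishes along $\partial\Sigma$, so $\partial\Sigma$ is totally geodesic in $S^{n-1}$ and hence a great circle; your conclusion $u_{\theta\theta}\in\operatorname{span}(u,u_{\theta})$ is the same fact in extrinsic language, and your harmonic-extension argument $\langle u,v\rangle\equiv 0$ for $v\in\Pi^{\perp}$ makes explicit the final step the paper leaves to the reader. Either packaging works; the normal-bundle projection is the simplification to keep in mind, since it short-circuits the boundary cancellation you had to verify by hand.
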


\begin{proof}
Since $u: D \rightarrow B^n$ is a minimal immersion, $u$ is harmonic, $u_{z\bar{z}}=0$, and conformal, $u_z \cdot u_z=0$. Since $u$ is harmonic, it follows that
\[
      (u_{zz} \cdot u_{zz} )_{\bar{z}} =0.
\]
Note that the normal bundle is smooth across branch points, and
\begin{equation}  \label{decomp}
       u_{zz}=u_{zz}^\perp + \frac{u_{zz} \cdot u_z}{|u_{\bar{z}}|^2} u_{\bar{z}}
                                             + \frac{u_{zz} \cdot u_{\bar{z}}}{|u_{z}|^2} u_{z}
\end{equation}
where $u_{zz}^\perp$ denotes the component of $u_{zz}$ orthogonal to $\Sigma=u(D)$. But
\[
       u_{zz} \cdot u_z=\frac{1}{2} (u_z \cdot u_z)_z=0
\]
since $u$ is conformal, and thus by (\ref{decomp})
\[
      u_{zz}^\perp \cdot u_{zz}^\perp =u_{zz}\cdot u_{zz}.
\]
In polar coordinates $(r, \theta)$ on the disk, we have
\begin{equation} \label{eqn:bdry}
      u_{zz}^\perp = \frac{1}{4} e^{-2i\theta} \left[ u_{rr}^\perp -\frac{1}{r^2} u_{\theta \theta}^\perp
                                                                                   - \frac{2i}{r} u_{r\theta}^\perp \right].
\end{equation}                                                                                  
By the free boundary condition, $u_r$ is orthogonal to $\p B$ along the boundary of the disk, and so $u_r =\lambda u$ for some function $\lambda$ on $\p D$. Therefore on $\p D$,
\[
     u_{r\theta}= \lambda_{\theta} u + \lambda u_\theta = \frac{\lambda_{\theta}}{\lambda} u_r + \lambda u_{\theta},
\]
and so $u_{r \theta}^\perp =0$ on $\p D$. It then follows from (\ref{eqn:bdry}) that $z^4 (u_{zz}^\perp)^2$ is real on $\p D$. Since $z^4 (u_{zz}^\perp)^2$ is holomorphic on $D$, $z^4 (u_{zz}^\perp)^2$ must be constant. But  $z^4 (u_{zz}^\perp)^2$ vanishes at the origin, and so $z^4 (u_{zz}^\perp)^2 \equiv 0$.

In particular, on $\p D$ we have
\[
      0=z^4 (u_{zz}^\perp)^2=\frac{1}{16} (u_{rr}^\perp -u_{\theta \theta}^\perp)^2.
\]      
By the minimality, this implies that $u_{rr}^\perp = u_{\theta \theta}^\perp=0$ on $\p D$. Therefore, the second fundamental form of $\Sigma$ is zero on $\p \Sigma$. It follows that the second fundamental form of $\p \Sigma$ in $S^{n-1}$ is zero. Therefore $\p \Sigma$ is a great circle, and $\Sigma$ must be an equatorial plane disk.
\end{proof}

More generally

\begin{theorem}
Let $u: D \rightarrow B$ be a proper branched immersion with parallel mean curvature vector, from the disk $D$ to a ball $B$ in an $n$-dimensional  space $N$ of constant curvature, such that $u(D)$ meets $\p B$ orthogonally. Then $\Sigma$ is contained in a 3-dimensional totally
geodesic submanifold, and is totally umbilic.
\end{theorem}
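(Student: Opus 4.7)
The plan is to adapt the quartic-differential technique of Theorem~2.1 to the space-form, parallel-mean-curvature setting. With the ambient Levi-Civita connection of $N$, set $A := \alpha(\p_z,\p_z)$, the $(2,0)$-part of the second fundamental form. In a constant-curvature space the Codazzi identity asserts that $\nabla^\perp\alpha$ is fully symmetric; combined with $\nabla^\perp H = 0$, a short isothermal-coordinate computation shows $\nabla^\perp_{\z}A = 0$, so $A$ is a holomorphic $(2,0)$-form with values in the (complexified) normal bundle. This yields two holomorphic differentials on $D$: the quartic $\Phi_4 = \langle A, A\rangle\, dz^4$ and the quadratic $\Phi_2 = \langle A, H\rangle\, dz^2$, the latter being holomorphic because $H$ is parallel.

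I would then run the boundary analysis. In a constant-curvature ball $\partial B$ is still totally umbilic, so the argument of Theorem~2.1 goes through with the outward normal $\nu_B$ of $\partial B$ in place of $u$: along $\partial D$, $u_r = \lambda\,\nu_B$, and the umbilicity of $\partial B$ still forces $\alpha(\p_r,\p_\theta) = 0$. Substituting into (\ref{eqn:bdry}) gives $A|_{\partial D} = \tfrac{1}{4}e^{-2i\theta}\,W$ with $W = \alpha_{rr} - \alpha_{\theta\theta}$ a real element of $T^\perp\Sigma$, so both $z^2\langle A, H\rangle$ and $z^4\langle A, A\rangle$ are holomorphic on $D$, real on $\partial D$, and vanish at $0$; as in Theorem~2.1, they must vanish identically. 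Hence $\langle A, H\rangle \equiv 0$ and $\langle A, A\rangle \equiv 0$ on $D$.

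Restricting $\langle A, A\rangle \equiv 0$ to $\partial D$ gives $\tfrac{1}{16}|W|^2 = 0$, so $W \equiv 0$ and $A|_{\partial D} = 0$. Now $A$ is a holomorphic section of the complexified normal bundle over $D$ -- the $(0,1)$-part of the normal connection endows it with a holomorphic structure in complex dimension one -- that vanishes on the entire boundary circle; by the identity principle for holomorphic sections of a holomorphic vector bundle on a disk (equivalently, Schwarz reflection in a local holomorphic trivialization) one concludes $A \equiv 0$ on $D$. Equivalently $\tilde\alpha \equiv 0$, so $\alpha(X, Y) = \langle X, Y\rangle H$ and $\Sigma$ is totally umbilic. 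Because $H$ is parallel and $\Sigma$ is umbilic, the $3$-plane distribution $T\Sigma \oplus \mathbb{R}H$ along $\Sigma$ is invariant under the ambient connection $\nabla^N$ (since $\nabla^N_X Y = \nabla^\Sigma_X Y + \langle X,Y\rangle H$ and $\nabla^N_X H = -|H|^2 X$), so it integrates to a $3$-dimensional totally geodesic submanifold of $N$ containing $\Sigma$.

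The step I expect to be the main obstacle is the passage from $A|_{\partial D} = 0$ to $A \equiv 0$. A naive maximum principle on $|A|^2$ is not enough: a direct computation using $\langle A, A\rangle = 0$ and the Ricci equation gives $\p_z\p_{\z}|A|^2 = |\nabla^\perp_z A|^2 - |A|^4/g_{z\z}$, whose sign is not a priori positive. One must instead rely on the holomorphic-bundle identity principle described above, which requires checking that the complexified normal bundle carries a genuine holomorphic structure, that the normal bundle extends smoothly across the isolated branch points (as noted in Theorem~2.1), and that a local holomorphic trivialization extends up to $\partial D$ so that boundary vanishing propagates globally.
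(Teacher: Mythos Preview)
Your proposal is correct and follows essentially the same route as the paper: holomorphicity of $A=\alpha(\partial_z,\partial_z)$ from Codazzi plus $\nabla^\perp H=0$, vanishing of the quartic $z^4\langle A,A\rangle$ via the boundary umbilicity of $\partial B$, hence $A=0$ on $\partial D$, then unique continuation to get $A\equiv 0$, and finally parallelism of $T\Sigma\oplus\mathbb{R}H$. Two minor remarks: the quadratic $\Phi_2=\langle A,H\rangle\,dz^2$ you introduce is superfluous (you never use $\langle A,H\rangle\equiv 0$), and for the step $A|_{\partial D}=0\Rightarrow A\equiv 0$ the paper simply writes $A$ in a global smooth frame as a solution of $\partial_{\bar z}f_j+\sum_k a_{kj}f_k=0$ with zero boundary values and invokes unique continuation for this first-order system---your phrasing via the holomorphic structure on the normal bundle is equivalent but requires the (standard) fact that a holomorphic vector bundle over the disk is holomorphically trivial.
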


\begin{proof}
Let $\n$ be the pull back connection in the pull back bundle $u^*(TN)$, let $\n^\perp$ be the connection in the pull back $u^*(N\Sigma)$ of the normal bundle to $\Sigma=u(D)$ in $N$, and let $A$ be the second fundamental form of $\Sigma$ in $N$. 
Let $z=x_1+ix_2$ be any local complex coordinate on $\Sigma$, and 
consider
\begin{equation} \label{Azz1}
      A(\der{z},\der{z})=\n^\perp_{\der{z}} du(\der{z})
      = \frac{1}{4} \left[ (A_{11}-A_{22} ) -2iA_{12} \right]
 \end{equation}
where $A_{ij}=A(\der{x_i},\der{x_j})$, $i, \, j=1,\, 2$, are the coefficients of the second fundamental form in the coordinate basis. We let $A_{ij;k}$ denote the covariant derivative $(\nabla^\perp_{\der{x^k}} A) (\der{x^i},\der{x^j})$. Given $p \in \Sigma$, we may choose local complex coordinates near $p$ such that $\lambda(p)=1$ and $\nabla \lambda (p)=0$, where $\lambda=\left|\fder{u}{x^i}\right|^2$ is the conformal factor. Then, computing at $p$,
\begin{align*}
        \n^\perp_{\der{\bar{z}}} A(\der{z},\der{z}) 
        &= \frac{1}{8} \left[ A_{11;1} -A_{22;1}+2A_{12;2} +i(A_{11;2}-A_{22;2}-2A_{12;1})\right]  \\
        &=\frac{1}{8}\left[ -2A_{22;1}+2A_{12;2} + i ( 2A_{11;2}-2A_{12;1}) \right] \\
        &=0
\end{align*}
where the second equality follows since at $p$, $A_{11;i}+A_{22;i}=0$, $i=1,\,2$ by the assumption that $\Sigma$ has parallel mean curvature (at a general point $(\lambda^{-1}A_{11}+\lambda^{-1}A_{22})_{;i}=0$), and the last equality follows from the Codazzi equations since the ambient manifold has constant sectional curvature. 

If we had chosen another complex coordinate $\zeta$ we would have by the chain rule 
\[  A(\der{\zeta},\der{\zeta})= (\fder{z}{\zeta})^2A(\der{z},\der{z}),
\]
and since the right hand side is holomorphic at $p$ in the $z$ coordinates it follows that it
is also holomorphic with respect to $\zeta$ at $p$. Since $p$ was an arbitrary point we have
shown that for any complex coordinate $z$ at any point we have
 \[ \n^\perp_{\der{\bar{z}}} A(\der{z},\der{z})=0.
 \] 
 
It now follows from metric compatibility of the connection that the function
\[
       \phi(z):=A(\der{z},\der{z})\cdot A(\der{z},\der{z})
\]
is holomorphic on $D$.

Consider polar coordinates $(r,\theta)$ on the disk, so $z=re^{i\theta}$. Set $w=\log z=\log r + i \theta$. Then 
\[ \der{z}=\fder{w}{z}\der{w}=\frac{1}{z}\der{w},\] and 
\[ A(\der{z},\der{z})=\frac{1}{z^2}A(\der{w},\der{w}).\]
On $\p D$ we have $\der{w}=\der{r}+i\der{\theta}$, and
\begin{equation} \label{Azz2}
          z^2A(\der{z},\der{z})=A(\der{w},\der{w}) 
          =A(\der{r},\der{r})-A(\der{\theta},\der{\theta})-2iA(\der{r},\der{\theta}).          
\end{equation}
By the free boundary condition, the outward unit normal $\eta$ along $\p \Sigma$ is normal to the sphere. Thus, since $du(\der{r})=\lambda \eta$, where $\lambda=|du(\der{r})|$, we have
\[
        A(\der{r},\der{\theta})=\n^\perp_{\der{\theta}}du(\der{r})=[-\lambda S(du(\der{\theta}))]^\perp,
 \]   
where $S$ is the shape operator of $\p B$. But the shape operator of a sphere in a space of constant curvature is a multiple of the identity. Therefore, 
\begin{equation} \label{boundary}
         A(\der{r},\der{\theta})=0 \mbox{ on } \p D,
\end{equation}
and so $z^2A(\der{z},\der{z})$ is real on $\p D$.
 
Thus, $z^4 \phi(z)$ is a holomorphic function on $D$ that is real on $\p D$. This implies that $z^4 \phi(z)$ is constant. But $z^4\phi(z)$ vanishes at the origin, and so $z^4\phi(z)\equiv 0$. In particular, by (\ref{Azz2}) and (\ref{boundary}) we see that on $\p D$, 
\[
      0=z^4\phi(z)=\left[A(\der{r},\der{r})-A(\der{\theta},\der{\theta})\right]^2.
\]
Therefore, $z^2A(\der{z},\der{z})=0$ on $\p D$. But $z^2A(\der{z},\der{z})$ is a holomorphic section of the normal bundle, and so $z^2A(\der{z},\der{z})\equiv 0$ on $D$. To see this, choose a global frame $s_1, \ldots, s_{n-2}$ for the normal bundle $u^*(N\Sigma)$ over the disk $D$. Then,
\[
         z^2A(\der{z},\der{z})=\sum_{j=1}^{n-2} f_js_j
\]
for some complex-valued functions $f_j$ on $D$, $j=1, \ldots, n-2$, and
\[
        0 = \n^\perp_{\der{\bar{z}}} \left(z^2A(\der{z},\der{z})\right) 
        = \n^\perp_{\der{\bar{z}}}\left(\sum_{j=1}^{n-2} f_js_j\right) 
        = \sum_{j=1}^{n-2} \left( \fder{f_j}{\bar{z}} + \sum_{k=1}^{n-2} a_{kj} f_k \right) s_j,
\]
where $a_{jk}$, $j,\,k=1, \ldots n-2$, are given by 
$\n^\perp_{\der{\bar{z}}} s_j = \sum_{k=1}^{n-2} a_{jk} s_k$. Therefore, we have
\[ 
   \left\{ \begin{array}{llc}
      \frac{\p f_j}{\p \z}+ \sum_{k=1}^{n-2} a_{kj} f_k =0 
             & \mbox{on}\;D &  \\
             & & \text{for}\; j= 1, \ldots, n-2. \\
             f_j =0 & \mbox{on} \; \p D & 
               \end{array}
       \right. 
\]
This implies that $f_j \equiv 0$ on $D$ for $j=1,\ldots , n-2$, and so $z^2A(\der{z},\der{z})\equiv 0$ on $D$. 

Hence, $A(\der{z},\der{z})\equiv 0$ on $D$.
By (\ref{Azz1}), this implies that $A_{11}=A_{22}$ and $A_{12}=0$ on $D$, or $A_{ij}=\frac{1}{2}H \delta_{ij}$. Consider the three dimensional linear space $\mathcal{V}(x)$ at each point $x$ of $D$ spanned by the tangent space $T_{u(x)}\Sigma$ and the mean curvature vector $H_{u(x)}$. We claim that $\mathcal{V}$ is parallel. To see this, let $X \in \Gamma(u^*(TN))$ be a vector field along $\Sigma$ such that $X_x \in \mathcal{V}(x)$ for all $x \in D$. Then $X=X^T+fH$, where $X^T$ is tangent to $\Sigma$ and $f$ is a smooth function on $D$. Then, if $Y \in \Gamma(u^*(T\Sigma))$,
\begin{align*}
       \n_Y X &= (\n_Y X^T)^T + \n^\perp_Y X^T + X(f) H + f (\n_Y H)^T +f \n^\perp H \\
       &=(\n_Y X)^T + \sum_{i,\,j=1}^2 c_{ij} A_{ij} + X(f) H + f (\n_Y H)^T   
       \qquad \mbox{ (since $H$ is parallel)} \\
       &=(\n_Y X)^T + c H + X(f) H + f (\n_Y H)^T   
       \qquad \mbox{ ( since $A_{ij}=\frac{1}{2}H \delta_{ij}$)} \\
        & \in \mathcal{V}.
\end{align*}  
Therefore, $\mathcal{V}$ is parallel. It follows that $\Sigma$ is contained in a 3-dimensional 
submanifold $N_0$ of $N$. To see this, let $p=u(0)$, and let $N_0$ be the totally geodesic
three dimensional submanifold containing $p$ and tangent to $\mathcal{V}(p)$. If
we fix a basis $e_1,e_2,e_3$ for $\mathcal{V}(p)$, then the
submanifold $N_0$ can be characterized as the set of points $q$ of $N$ which can be
joined by a curve $\gamma(t)$ from $p$ to $q$ such that $\gamma'(t)$ is a linear
combination of the parallel transports $e_1(t),e_2(t),e_3(t)$ for each $t$. Since $\mathcal{V}$
is parallel along $\Sigma$ and contains the tangent space to $\Sigma$ at each point,
this property holds for all $q\in\Sigma$. Therefore $\Sigma\subset N_0$.

The classification of totally umbilic surfaces in three-dimensional space forms is well known (see \cite{Sp}). In the case of $\mathbb{R}^n$, $\Sigma$ must be an equatorial plane or a spherical cap meeting $\p B$ orthogonally; in particular, if $\Sigma$ is minimal, $\Sigma$ must be an equatorial plane disk in the ball. In the case of $\mathbb S^n$, $\Sigma$ must be a spherical cap meeting $\p B$ orthogonally; in particular, if $\Sigma$ is minimal, it must be a totally geodesic disk in $B$. In the case of hyperbolic space, $\Sigma$ must be a totally geodesic plane disk or its equidistant, a horosphere, or a round sphere meeting $\p B$ orthogonally; in particular, if $\Sigma$ is minimal, then $\Sigma$ must be a totally geodesic plane disk in $B$.
\end{proof}
  
\begin{corollary}
Let $\Sigma$ be an immersed minimal $\mathbb S^2$ in $\mathbb S^n$ that is invariant under reflection through a totally geodesic $\mathbb S^{n-1}$. Then $\Sigma$ is totally geodesic.
\end{corollary}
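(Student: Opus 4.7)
The plan is to reduce to Theorem 2.2 by cutting $\Sigma$ along the fixed set of the reflection, producing a free boundary minimal disk in a hemisphere. Let $H\subset\mathbb{S}^n$ denote the totally geodesic $\mathbb{S}^{n-1}$, $\rho$ the reflection through $H$, and $B^+$ one of the two closed hemispheres with $\partial B^+=H$; note that $B^+$ is a geodesic ball of radius $\pi/2$ in $\mathbb{S}^n$, so it is exactly a ball of the type appearing in Theorem 2.2. I would assume that $\Sigma$ is not entirely contained in $H$, for otherwise $\rho$ acts trivially on $\Sigma$ and the hypothesis carries no content (the corollary is understood to impose a nontrivial symmetry). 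Let $u:\mathbb{S}^2\to\mathbb{S}^n$ parametrize $\Sigma$; since $u$ is a minimal immersion it is real analytic, so $u^{-1}(H)$ is a proper real-analytic subvariety of $\mathbb{S}^2$ of dimension at most one, and it is nonempty because $u(\mathbb{S}^2)$ is connected and $\rho$-invariant but not pointwise fixed by $\rho$.

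Next I would verify that at every regular point $p\in u^{-1}(H)$ the surface $\Sigma$ meets $H$ orthogonally. The plane $du_p(T_p\mathbb{S}^2)$ is invariant under $d\rho_{u(p)}$, whose $+1$-eigenspace is $T_{u(p)}H$ and whose $-1$-eigenspace is the line normal to $H$ in $T_{u(p)}\mathbb{S}^n$. Since $du_p(T_p\mathbb{S}^2)$ is two dimensional and contains the tangent line to the one-dimensional intersection curve (which sits in the $+1$-eigenspace), the complementary tangent direction must lie in the $-1$-eigenspace, giving $\Sigma\perp H$ at $p$.

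The third step is topological: produce a closed disk $\bar\Omega\subset\mathbb{S}^2$ bounded by a smoothly embedded loop $C\subset u^{-1}(H)$ whose interior is disjoint from $u^{-1}(H)$. By real analyticity, the regular part of $u^{-1}(H)$ is a finite union of embedded analytic loops (after discarding finitely many isolated singular points), and an innermost-loop argument on $\mathbb{S}^2$ yields such a $C$ and $\bar\Omega$. Then $u(\operatorname{int}\Omega)$ is disjoint from $H$ and, by connectedness, lies in a single open hemisphere, which we may take to be $B^+\setminus H$. Thus $u|_{\bar\Omega}:\bar\Omega\to B^+$ is a proper branched minimal immersion of a disk into $B^+$ whose boundary lies in $\partial B^+$, and the previous step ensures the free boundary condition. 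Applying Theorem 2.2 with vanishing mean curvature and ambient constant curvature $+1$ (which in the spherical case forces a totally umbilic, hence totally geodesic, disk), we conclude that $u(\bar\Omega)$ is a totally geodesic disk in $B^+$.

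Finally, since $u|_{\bar\Omega}$ factors through a totally geodesic $\mathbb{S}^2\subset\mathbb{S}^n$, unique continuation for real-analytic minimal immersions forces the full immersion $u$ to factor through the same totally geodesic $\mathbb{S}^2$, proving $\Sigma$ is totally geodesic. The main obstacle is the topological step of selecting $\bar\Omega$: one must handle possible isolated singularities of $u^{-1}(H)$ (tangencies or self-crossings of the intersection) and ensure that an innermost smoothly bounded disk still exists, but this follows from the finite real-analytic stratification of $u^{-1}(H)\subset\mathbb{S}^2$.
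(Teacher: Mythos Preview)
Your proposal is correct and follows essentially the same route as the paper: cut along $H$, select an innermost disk, verify the free boundary condition, apply Theorem~2.2, and then propagate to all of $\Sigma$. The paper is terser---it observes that the reflection symmetry forces orthogonality at every intersection point, hence transversality, so $u^{-1}(H)$ is automatically a union of smooth circles without any appeal to real-analytic stratification, and it closes by reflecting the totally geodesic disk across $H$ rather than invoking unique continuation.
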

\begin{proof} Since the surface $\Sigma$ intersects the equatorial $\mathbb S^{n-1}$
orthogonally, it is transversal. Thus the intersection consists of smooth circles. If we
consider an inner-most circle (one which bounds a disk disjoint from the other circles),
then we get a free boundary minimal disk in a hemisphere, and by Theorem 2.2 this
disk is totally geodesic. It follows that $\Sigma$ is the corresponding totally geodesic
minimal $\mathbb S^2$ since it is assumed to be reflection symmetric.
\end{proof}
     
\bibliographystyle{plain}

\begin{thebibliography}{HKW}

\bibitem[A]{A1} F.~J. Almgren, Some interior regularity theorems for minimal surfaces and an 
                   extension of Bernstein's theorem, {\em Ann. of Math. (2)} {\bf 84} (1966) 277--292.
\bibitem[C]{C} E. Calabi, Minimal immersions of surfaces in Euclidean spheres, 
                   {\em J. Differential Geometry} {\bf 1} (1967) 111--125.
\bibitem[FS1]{FS1} A. Fraser, R. Schoen, The first Steklov eigenvalue, conformal geometry, and    
                 minimal surfaces, {\em Adv. Math.} {\bf 226} (2011), {\em no. 5}, 4011--4030.         
\bibitem[FS2]{FS2} A. Fraser, R. Schoen, Sharp eigenvalue bounds and minimal surfaces in the
                     ball, arXiv:1209.3789 [math.DG] (2012).                                    
\bibitem[Ni]{Ni} J.~C.~C. Nitsche, Stationary partitioning of convex bodies, 
                 {\em Arch. Rational Mech. Anal.} {\bf 89} (1985), no. 1, 1--19.    
\bibitem[RS]{RS} A. Ros, R. Souam, On stability of capillary surfaces in a ball, {\em Pacific J. Math.} 
                 {\bf 178} (1997), no. 2, 345--361.    
\bibitem[S]{S} R. Souam, On stability of stationary hypersurfaces for the partitioning problem 
                for balls in space forms, {\em Math. Z.} {\bf 224} (1997), no. 2, 195--208.                            
\bibitem[Sp]{Sp} M. Spivak, {\em A Comprehensive Introduction to Differential Geometry}, 
                 Vol 3 and 4, Boston, Publish or Perish, 1970. 
\end{thebibliography}

\end{document}